\documentclass[12pt,oneside]{amsart}
\usepackage[margin=1in]{geometry}
\usepackage{amssymb}
\usepackage{amsmath}
\usepackage{amsthm}
\usepackage{amsfonts}
\usepackage[hidelinks]{hyperref}
\usepackage{bm}
\usepackage{tikz}
\usetikzlibrary{calc,decorations.markings}

\usepackage{mathrsfs}

\newenvironment{nouppercase}{%
  \renewcommand{\uppercasenonmath}[1]{}}{}


\numberwithin{equation}{section}
\numberwithin{figure}{section}

\theoremstyle{plain}
\newtheorem{thm}[equation]{Theorem}
\newtheorem{lemma}[equation]{Lemma}

\newtheorem{prop}[equation]{Proposition}

\theoremstyle{definition}

\newtheorem{remark}[equation]{Remark}


\begin{document}

\title[Trigonometric inequalities and the Riemann zeta-function]{Trigonometric inequalities and the Riemann zeta-function}

\author{Pace P.\ Nielsen}
\address{Department of Mathematics, Brigham Young University, Provo, UT 84602, USA}
\email{pace@math.byu.edu}

\begin{abstract}
We very slightly improve the leading constant of the (currently best) proven asymptotic zero-free region of the Riemann zeta-function, by using an easy improvement to a trigonometric polynomial.
\end{abstract}

\begin{nouppercase}
\maketitle
\end{nouppercase}

\section{Introduction}

In the process of working through the derivation of the (currently best) proven asymptotic zero-free region of the Riemann zeta-function, as obtained by Kevin Ford in \cite{Ford}, a very slight improvement in the leading constant, from $0.5507$ to $0.55127\ldots$, was obtained.  The improvement comes from considering a slightly larger class of nonnegative trigonometric polynomials; namely, allowing those with an extra factor of $1+\cos(\vartheta)$.

This paper was originally meant to serve only as a set of personal notes on optimizing the zero-free region of the Riemann zeta-function.  However, it is being posted to the arXiv in the hope that it may be of use to those who need even a slight improvement in the asymptotic range of the zero-free region.  Of course, the slightly more general trigonometric polynomials that we use here may also find use in future papers on zero-free regions.  It is hoped that non-experts will find the colloquial nature of this paper helpful, but experts may safely skip many of the details.  In fact, those familiar with the notation and terminology of \cite{Ford}, and who are only interested in the improved constant, may safely jump to the \emph{last paragraph} of the paper for those details, keeping \eqref{Eq:TrigInequalityBs} in mind.

The only other new pieces of information in this paper are as follows: (1) a reinterpretation of \cite[Lemma 5.1]{Ford}, using the midpoint approximation of an integral, that arises from our Lemma \ref{Lemma:NewkSums} and (2) correcting a few minor typos that appear in the literature.  Everything else is a rehashing of the computations and ideas already found in \cite{Ford}.

\section{Reinterpreting zeros}

The starting point of many computations of zero-free regions is Cauchy's residue formula.  Let $S,T>0$ be two large, real parameters.  Let $C_{S,T}$ be the counterclockwise rectangular contour whose corners are $\frac{1}{2}+\frac{1}{S} -iT$, $1+S-iT$, $1+S+iT$, and $\frac{1}{2}+\frac{1}{S}+iT$.  The Riemann hypothesis is equivalent to the claim that as $S,T\to \infty$ we obtain
\[
\frac{1}{2\pi i}\oint_{C_{S,T}} \frac{\zeta'(s)}{\zeta(s)}\, ds=-1.
\]
Only the pole at $s=1$ should contribute to the integral, as pictured below:
\begin{center}
\begin{tikzpicture}
\draw (0,-4.5) -- (0,4.5);  
\draw (0,0) -- (5.5,0);
\foreach \y in {-4,...,4} {
  \draw (-4pt,\y) -- (4pt,\y) node[pos=0,left] {\y};
}
\foreach \y in {1,...,5} {
  \draw (\y,-4pt) -- (\y,4pt) node[pos=0,below] {\y};
}

\node at (1,0) {$\bullet$}; 

\draw[dashed,thick] (0.5,-4.5) -- (0.5,4.5);
\draw[thick,blue,
decoration={ markings,
      mark=at position 0.2 with {\arrow{latex}},
      mark=at position 0.4 with {\arrow{latex}},
      mark=at position 0.6 with {\arrow{latex}},
      mark=at position 0.8 with {\arrow{latex}}},
      postaction={decorate}
]
  (0.8,3) -- (0.8,-3);
\draw[thick,blue,
decoration={ markings,
      mark=at position 0.2 with {\arrow{latex}},
      mark=at position 0.5 with {\arrow{latex}},
      mark=at position 0.8 with {\arrow{latex}}},
      postaction={decorate}
]
 (0.8,-3)--(4.33,-3);
\draw[thick,blue,
decoration={ markings,
      mark=at position 0.2 with {\arrow{latex}},
      mark=at position 0.4 with {\arrow{latex}},
      mark=at position 0.6 with {\arrow{latex}},
      mark=at position 0.8 with {\arrow{latex}}},
      postaction={decorate}
]
 (4.33,-3)--(4.33,3);
\draw[thick,blue,
decoration={ markings,
      mark=at position 0.2 with {\arrow{latex}},
      mark=at position 0.5 with {\arrow{latex}},
      mark=at position 0.8 with {\arrow{latex}}},
      postaction={decorate}
]
 (4.33,3)--(0.8,3);
\end{tikzpicture}
\end{center}

Using current knowledge of $\zeta(s)$, the right side of the rectangle is easy to manage, but the other sides are not.  To overcome this defect, instead of integrating over a very large rectangle, it has been standard procedure to instead integrate over a very small circle containing a purported zero that occurs to the right of the line $\Re(s)=1/2$.  The circle is chosen to barely penetrate the critical strip.  Writing that purported zero in the form $\beta+it$, for some real number $1/2 <\beta<1$, and some (moderately large) real number $t>0$, the contour integration looks like
\begin{center}
\begin{tikzpicture}
\draw (0,0) -- (0,0.8);
\node at (0,1.35) {$\vdots$};
\draw (0,1.7) -- (0,4.5);
\draw (0,0) -- (7.5,0);

\node at (5,0) {$\bullet$}; 

\draw (0,0) node[below] {0};
\draw (5,4pt)--(5,-4pt) node[below] {1};
\draw[dashed,thick] (2.5,0)--(2.5,0.8);
\node at (2.5,1.35) {$\vdots$};
\draw[dashed,thick] (2.5,1.7)--(2.5,4.5);

\node at (4.8,3.2) {$\bullet$};
\node at (5.2,2.9) {$\beta+it$};

\draw[thick,blue,decoration={ markings,
      mark=at position 0.2 with {\arrow{latex}},
      mark=at position 0.5 with {\arrow{latex}},
      mark=at position 0.8 with {\arrow{latex}}},
      postaction={decorate}] (5.2,3.2) circle (25pt);
\end{tikzpicture}
\end{center}

In order to make the integral more tractable, the integral is not applied to $\zeta'(s)/\zeta(s)$ directly, but rather one integrates against a well-chosen function.  For instance, the chosen function of \cite[Lemma 3.2]{HB2} is $s^{-1}-sR^{-2}$, where $R$ is the radius of the circle.

A key insight of the paper \cite{Ford} is that by integrating against a properly chosen function, we may again use rectangular contours.  But what function should we use?  Four ideas help us choose wisely.  First, by choosing a function that quickly decreases to zero as the imaginary part increases, as well as deciding to keep the width of the rectangle fixed, then the contributions from the top and bottom sides of the rectangle become negligible as $T\to \infty$.  Second, think of the integral of $\zeta'(s)/\zeta(s)$ as equal to $\log\zeta(s)$; this introduces a branch cut along the contour.  This issue can be smoothed over by choosing a function with a zero along the contour, at an appropriately chosen point to simplify the computations.  Third, while we don't expect the left and right sides of the rectangle to behave exactly the same, we should add as much symmetry as possible to the picture.  Fourth, it is convenient to focus some attention to the center of the rectangle.  This is done by having a pole of the function at the center of the rectangle.

Putting all of these considerations together, then if our rectangle is centered at a point $z$, and the left and right sides are distance $\eta>0$ from $z$, then a function satisfying all of the needed conditions is naturally $h(s-z)$ where
\[
h(s)=\frac{\pi}{2\eta}\cot\left(\frac{\pi s}{2\eta}\right).
\]
This leads to the following general lemma.

\begin{lemma}[see {\cite[Lemma 2.2]{Ford}}]\label{Lemma:FordMainLemma}
Suppose $f$ is the quotient of two entire functions of finite order, and does not have a zero or a pole at $s=z$ nor at $s=0$.  Then, for all $\eta>0$ except for a set of Lebesgue measure $0$ \textup{(}the exception set may depend on $f$ and $z$\textup{)}, we have
\begin{equation}\label{Eq:FirstLemma}
\begin{array}{rcl}
\displaystyle-\Re\frac{f'(z)}{f(z)} &= &\displaystyle \frac{\pi}{2\eta}\sum_{|\Re(z-\rho)|\leq \eta}m_{\rho}\Re\cot\left(\frac{\pi(\rho-z)}{2\eta}\right)\\[20pt]
& &\displaystyle \quad + \frac{1}{4\eta}\int_{-\infty}^{\infty} \frac{\log|f(z-\eta+\frac{2\eta i u}{\pi})|-\log|f(z+\eta+\frac{2\eta i u}{\pi})|}{\cosh^2(u)}\, du,
\end{array}
\end{equation}
where $\rho$ runs over the zeros and poles of $f$ \textup{(}with multiplicity\textup{)}, and $m_{\rho}$ is either $1$ or $-1$ according to whether $\rho$ is a zero or a pole respectively.\hfill\qed
\end{lemma}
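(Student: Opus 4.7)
The plan is to apply Cauchy's residue theorem to the meromorphic function $F(s)=\frac{f'(s)}{f(s)}\,h(s-z)$ on the rectangular contour $R_{\eta,T}$ with corners $z\pm\eta\pm iT$ (traversed counterclockwise), and then let $T\to\infty$ along a sequence avoiding the imaginary parts of the zeros and poles of $f$. Since $h(\cdot-z)$ has a simple pole of residue $1$ at $s=z$ and $f$ is regular there, $F$ contributes residue $f'(z)/f(z)$ at that point. At each zero or pole $\rho$ of $f$ strictly inside $R_{\eta,T}$ (for $\eta$ outside a countable set, no such $\rho$ lies on a vertical side or satisfies $\rho\in z+2\eta\mathbb{Z}$), the factor $f'/f$ has a simple pole of residue $m_\rho$ while $h(\cdot-z)$ is holomorphic, giving residue $m_\rho h(\rho-z)$. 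For $T$ large the containment criterion specializes to $|\Re(\rho-z)|\le\eta$, producing the sum on the right side of \eqref{Eq:FirstLemma}.

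Next I split the contour integral into its vertical and horizontal pieces. On the vertical sides $s=z\pm\eta+it$, the identity $\cot(\tfrac{\pi}{2}+iy)=-i\tanh(y)$ gives the purely imaginary formula $h(s-z)=-\tfrac{i\pi}{2\eta}\tanh(\pi t/(2\eta))$. After the substitution $u=\pi t/(2\eta)$ (which makes $s=z\pm\eta+\tfrac{2\eta iu}{\pi}$ match the statement) and using $(f'/f)\,ds=d\log f$, integration by parts in $u$ converts the combined right-minus-left vertical contribution into a main term $\tfrac{i\pi}{2\eta}\int_{-U}^{U}(\log f_R-\log f_L)\,\tfrac{du}{\cosh^{2}(u)}$ plus a boundary piece $-\tfrac{i\pi}{2\eta}\bigl[\tanh(u)(\log f_R-\log f_L)\bigr]_{-U}^{U}$, where $f_R,f_L$ denote $f$ at the right/left parametric points and $U=\pi T/(2\eta)$. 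On the horizontal sides, the asymptotic $h(w)=\mp\tfrac{i\pi}{2\eta}+O(e^{-\pi|\Im w|/\eta})$ (as $\Im w\to\pm\infty$) together with the contour orientation (top goes right-to-left, bottom left-to-right) reduces the combined top-plus-bottom principal part to $\tfrac{i\pi}{2\eta}\sum_{\pm}\bigl[\log f(z+\eta\pm iT)-\log f(z-\eta\pm iT)\bigr]$; adding this to the vertical boundary piece yields $\tfrac{i\pi}{2\eta}(1-\tanh U)$ times a quantity that grows at most polynomially in $T$, which vanishes in the limit.

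Dividing by $2\pi i$ and passing to the limit produces
\[
\frac{f'(z)}{f(z)}+\sum_{|\Re(\rho-z)|\le\eta}m_\rho\,h(\rho-z)=\frac{1}{4\eta}\int_{-\infty}^{\infty}\frac{\log f(z+\eta+\frac{2\eta iu}{\pi})-\log f(z-\eta+\frac{2\eta iu}{\pi})}{\cosh^{2}(u)}\,du.
\]
Taking real parts (so that $\Re\log f=\log|f|$) and multiplying through by $-1$ rearranges directly into \eqref{Eq:FirstLemma}. The exceptional measure-zero set of $\eta$ consists of the countably many values for which some zero or pole of $f$ lies on a vertical side of the rectangle or at a point of $z+2\eta\mathbb{Z}$. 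The main technical obstacle will be the rigorous horizontal-side limit, where the finite-order hypothesis is essential: Hadamard's factorization forces $\log|f(s)|$ to grow at most polynomially in $|\Im s|$ away from small neighborhoods of zeros and poles, so one can select $T_n\to\infty$ clear of those neighborhoods along which the exponentially small factor $1-\tanh(\pi T_n/(2\eta))$ overwhelms the polynomial growth on the top and bottom slices, and the decaying error in the asymptotics for $h$ there likewise vanishes.
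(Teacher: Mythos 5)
Your approach matches the one the paper's remark attributes to Ford's proof: integrate $\frac{f'(s)}{f(s)}\,h(s-z)$ around the rectangle $z\pm\eta\pm iT$, extract the residue $f'(z)/f(z)$ from the simple pole of $h(\cdot-z)$ at $z$ together with $m_\rho\,h(\rho-z)$ from the simple poles of $f'/f$ at the zeros and poles of $f$, integrate by parts on the vertical sides to transfer the derivative from $\log f$ onto $\tanh u$, and argue that the boundary terms and horizontal segments wash out as $T\to\infty$ using the exponential factor $1-\tanh(\pi T/(2\eta))$ against the at-most-polynomial growth of $\log|f|$ guaranteed by the finite-order hypothesis. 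The bookkeeping (substitution $u=\pi\tau/(2\eta)$, the cancellation between $\frac{\pi}{2\eta i}$ and the sign from integration by parts, dividing by $2\pi i$, and passing to real parts so that $\Re\log f=\log|f|$) all checks out and reproduces \eqref{Eq:FirstLemma}.

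The one genuine gap is your description of the exceptional set. You state it is ``the countably many values for which some zero or pole of $f$ lies on a vertical side of the rectangle or at a point of $z+2\eta\mathbb{Z}$,'' but that is not the right obstruction. A single zero or pole sitting on the line $\Re s = \Re z\pm\eta$ contributes only a logarithmic singularity in $\log|f|$, which is integrable against $\operatorname{sech}^2 u$; so such $\eta$ need not be thrown out at all (and merely excluding them would not make the formula hold). The actual danger, as the paper's remark following the lemma indicates, is that \emph{too many} zeros or poles crowd near one of the two vertical lines, which can make the infinite integrals in \eqref{Eq:FirstLemma} diverge. Showing that this pathology is confined to a Lebesgue-null set of $\eta$ is where the finite-order hypothesis does real work: one needs a density estimate for the zeros (e.g.\ via Jensen's formula) together with a Fubini or Borel--Cantelli style argument in $\eta$. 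That step is not in your sketch, and the naive countable-set argument you propose does not substitute for it.
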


\begin{remark}
The two log terms in the numerator arise when evaluating the path integral along the left and right sides of the rectangle, respectively.  The other terms occur as the residues, coming from the poles of the integrand $\frac{f'(s)}{f(s)}h(s-z)$.  Exceptional values of $\eta$ occur when the integrals diverge, which might happen if too many zeros or poles are close to the two lines $\Re s=z\pm \eta$.

There is a small typo in the proof of this lemma.  In the second paragraph, when the integrals $I_j$ are being defined, a factor $\frac{1}{2\pi i}$ seems to be missing, but it is reintroduced later.
\end{remark}

\section{Simplifying considerations}

Lemma \ref{Lemma:FordMainLemma} applies when $f=\zeta$, since $\zeta$ has order $1$.  Due to well-known growth conditions on $\zeta$ in the vertical direction, there are no exceptional values of $\eta$; this is pointed out in the first two sentences of the proof of \cite[Lemma 3.4]{Ford}.  We can additionally simplify some of the quantities that appear in \eqref{Eq:FirstLemma} when $f=\zeta$, which will be the topic of this section.

The left side of \eqref{Eq:FirstLemma} has a nice representation as a Dirichlet series, but only when $\Re z>1$.  Moreover, we have very little knowledge of $-\zeta'/\zeta$ in the critical strip.  So, we will hereafter assume $\Re z>1$.

To take the best advantage of our purported zero $\rho_0=\beta+it$, we would like to take $z$ as close as possible to $\rho_0$.  Thus, the most important value of $z$ is $1+\epsilon+it$, where $\epsilon>0$ is a real parameter that we would like to send to zero.  However, we will see later that there are other values of $z$ that are also important.

Under the assumption that $\Re(z)>1$, it is well-known that every zero of $\zeta$ occurs strictly to the left of $z$. Given a zero, $\rho$, with $|\Re(z-\rho)|\leq \eta$, then we have $0< \Re(z-\rho)\leq \eta$.  This implies that
\[
\Re(h(\rho-z))=\frac{\pi}{2\eta}\Re\cot\left(\frac{\pi(\rho-z)}{2\eta}\right)\leq  0.
\]
Suppose we were to completely ignore the contributions of the zeros $\rho\neq \beta+it$ (including the conjugate zero $\beta-it$), and treat $\beta+it$ as simple.  This would increase the right-hand side \eqref{Eq:FirstLemma} and simplify the sum greatly, leaving only one term.  Thinking about this another way, the worst-case scenario is when $\beta+it$ is an isolated, simple zero.  This type of ``trivial'' estimation leads to decent zero-free regions.  However, we will continue to keep track of all the zeros, to allow for better optimizations later.

In any case,  when $\rho$ is distant from $z$, the terms in the sum in \eqref{Eq:FirstLemma} are quite small.  To see this, note that for real variables $x$ and $y$, we have the equality
\[
\Re \cot(x+iy) = \frac{\sin(2x)}{\cosh(2y)-\cos(2x)},
\]
which decreases exponentially as $y$ increases.  Thus, when a zero, $\rho$, is vertically distant from $z$, its contribution to the sum is exponentially negligible anyway.  This applies in particular to the conjugate zero $\beta-it$, since we know $t>100$.  (In fact, by \cite{PT} we know $t>3\cdot 10^{12}$.)  Moreover, this also tells us that the positive contribution $\Re(-h(1-z))$, arising from the pole at $s=1$, is negligible.  (Hereafter we will use big-O and little-o notations, which are taken with respect to $t\to \infty$.)  This term can easily be absorbed into other error terms.

It is worth pointing out that if we knew that there were indeed other zeros near $\beta+it$, this would improve the numerics in bounding $\beta$ away from $1$.  A similar idea is used to great effect in \cite{HB2}, in more general circumstances.

We next give an upper bound for the integral
\[
\frac{1}{4\eta}\int_{-\infty}^{\infty}\frac{\log|\zeta(z-\eta+\frac{2\eta iu}{\pi})|}{\cosh^2(u)}\, du.
\]
Recall that this integral is the contribution from the left side of the (now infinite) rectangle.  We assume $\eta>\Re(z-1)$, so we are penetrating the critical strip.  Bounds on $\zeta$ in this region arise from the methods of Korobov \cite{Korobov} and Vinogradov \cite{Vinogradov}.  These methods yield an upper bound of the form
\begin{equation}\label{Eq:KVBound}
\text{for every $T\geq 3$, }\ |\zeta(\sigma+iy)|\leq A T^{B(1-\sigma)^{3/2}}\log^{2/3}T\ \text{ if $1\leq |y|\leq T$ and $1/2\leq \sigma\leq 1$},
\end{equation}
where $A,B>0$ are some explicit constants.  Improving this bound is a fundamental problem, and represents the most difficult step in asymptotically improving the zero-free region for $\zeta$.  We will pass over these difficulties, and merely accept the bound.

\begin{remark}
We will leave the explicit constants $A$ and $B$ unspecified, for two reasons.  First, they continue to be improved.  Most recently, in \cite{Ford2} it was shown that the values of $A=76.2$ and $B=4.45$ will work, which was then improved to $A=62.6$ when $\sigma=1$ in \cite{Trudgian}.  Second, a small---but probably fixable---error in the literature has propagated through such numerics; this error was noticed by Kevin Ford and is explained in \cite{Patel}.
\end{remark}

In practice, $\eta$ will be extremely small, so assuming $\eta<\frac{1}{2}<\min(\frac{\pi}{4},\frac{1}{2}+\epsilon)$ costs us nothing.  For technical reasons, we assume $\eta\geq \epsilon+\frac{1}{t}$, so we may apply \cite[Lemma 3.4]{Ford} with $\sigma=1+\epsilon-\eta$ and $a=\frac{2\eta}{\pi}$ to obtain
\[
\frac{1}{4\eta}\int_{-\infty}^{\infty}\frac{\log|\zeta(1+\epsilon+it-\eta+\frac{2\eta iu}{\pi})|}{\cosh^2(u)}\, du<\frac{1}{2\eta}\left(\log A+B\eta^{3/2}\log t+\frac{2}{3}\log\log t\right).
\]
To optimize the right side, as a function of $t$, we will take
\begin{equation}\label{Eq:EtaAsymp}
\eta=C\left(\frac{\log\log t}{\log t}\right)^{2/3}
\end{equation}
for some positive constant $C$ that is to be chosen later.

Each of the previous simplifications appears in \cite{Ford}.  We end with one new observation; it concerns the integral along the right side of the rectangle.

\begin{lemma}\label{Lemma:NewkSums}
If $\eta>0$ and $\Re(z)>1$, then
\[
\frac{1}{4\eta}\int_{-\infty}^{\infty}\frac{\log|\zeta(z+\eta+\frac{2\eta i u}{\pi})|}{\cosh^2(u)}\, du = \sum_{k=1}^{\infty}-\Re\frac{\zeta'(z+2k\eta)}{\zeta(z+2k\eta)}.
\]
\end{lemma}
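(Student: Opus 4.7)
The plan is to expand $\log|\zeta|$ as a Dirichlet-type series, evaluate the resulting Fourier-like integrals term by term, and then repackage the arithmetic sum into logarithmic derivatives of $\zeta$.

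First, since $\Re(z) > 1$ and $\eta > 0$, the entire path of integration $s = z+\eta + \tfrac{2\eta i u}{\pi}$ lies in the half-plane $\Re s > 1$, where $\zeta$ is nonvanishing and
\[
\log \zeta(s) = \sum_{n=2}^{\infty} \frac{\Lambda(n)}{\log n}\, n^{-s}
\]
converges absolutely. Taking real parts and substituting $s = z+\eta + \tfrac{2\eta i u}{\pi}$ gives
\[
\log|\zeta(s)| = \Re \sum_{n=2}^{\infty} \frac{\Lambda(n)}{\log n}\, n^{-z-\eta}\, e^{-2\eta i u (\log n)/\pi}.
\]
Absolute convergence of $\sum \Lambda(n)/(\log n \cdot n^{\Re z + \eta})$ together with the integrability of $\operatorname{sech}^2(u)$ justifies exchanging the sum and the integral by Fubini.

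Next, I invoke the standard Fourier identity
\[
\int_{-\infty}^{\infty} \frac{e^{-i\omega u}}{\cosh^{2}(u)}\, du \;=\; \frac{\pi \omega}{\sinh(\pi \omega/2)},
\]
applied with $\omega = 2\eta \log n/\pi$, producing the factor $\dfrac{2\eta \log n}{\sinh(\eta \log n)}$. After dividing by $4\eta$, the $\log n$ cancels and the left-hand side of the claimed identity becomes
\[
\Re \sum_{n=2}^{\infty} \frac{\Lambda(n)}{2\,n^{z+\eta} \sinh(\eta \log n)}.
\]

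Finally, using $2\sinh(\eta\log n) = n^{\eta} - n^{-\eta}$ and the geometric series
\[
\frac{1}{n^{z+\eta}(n^{\eta}-n^{-\eta})} = \frac{n^{-z-2\eta}}{1-n^{-2\eta}} = \sum_{k=1}^{\infty} n^{-z-2k\eta},
\]
swapping the order of summation (again justified by absolute convergence since $\Re(z+2k\eta) > 1$ for all $k\ge 1$) yields
\[
\Re \sum_{k=1}^{\infty} \sum_{n=2}^{\infty} \Lambda(n)\, n^{-(z+2k\eta)} = \sum_{k=1}^{\infty} -\Re \frac{\zeta'(z+2k\eta)}{\zeta(z+2k\eta)},
\]
which is exactly the right-hand side. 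There is no real obstacle here; the only items deserving care are the Fourier evaluation of $\operatorname{sech}^2$ and the two interchange-of-summation steps, both routine consequences of absolute convergence in the region $\Re z > 1$.
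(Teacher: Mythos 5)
Your proof is correct, and it takes a genuinely different route from the paper's. The paper proves the identity by reapplying its Lemma~\ref{Lemma:FordMainLemma} (Ford's Lemma~2.2) with center $z+2k\eta$ for each $k\geq 1$; since $\zeta$ has no zeros or poles in the half-plane $\Re s>1$, the zero/pole sum vanishes, each application expresses $-\Re\frac{\zeta'(z+2k\eta)}{\zeta(z+2k\eta)}$ as a difference of two integrals, and summing over $k$ from $1$ to $m$ telescopes; letting $m\to\infty$ and using $\zeta(s)\to 1$ uniformly as $\Re s\to\infty$ finishes it. Your argument instead is direct and self-contained: expand $\log|\zeta|$ via $\log\zeta(s)=\sum_n\frac{\Lambda(n)}{\log n}n^{-s}$, apply the Fourier identity $\int_{-\infty}^{\infty}\frac{e^{-i\omega u}}{\cosh^2 u}\,du=\frac{\pi\omega}{\sinh(\pi\omega/2)}$ with $\omega=\frac{2\eta\log n}{\pi}$ (the cancellation of the $\log n$ factor is exactly why $\frac{\Lambda(n)}{\log n}$ becomes $\Lambda(n)$), and then expand $\frac{1}{2\sinh(\eta\log n)}=\frac{n^{-\eta}}{1-n^{-2\eta}}$ as a geometric series to reassemble $\sum_{k\geq 1}-\frac{\zeta'(z+2k\eta)}{\zeta(z+2k\eta)}$. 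The paper's approach buys you conceptual economy (no new special-function identity, just a reuse of the machinery already in hand, and it makes transparent why the answer is a ``midpoint sum'' over shifts by $2\eta$); your approach buys you transparency and independence from Lemma~\ref{Lemma:FordMainLemma}, reducing the identity to Dirichlet-series arithmetic plus one standard Fourier transform. Both interchanges of sum and integral/sum that you flag are indeed justified by the absolute convergence you cite, and the geometric series step is valid because $n^{-2\eta}<1$ for $n\geq 2$. No gaps.
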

\begin{proof}
Apply \cite[Lemma 2.2]{Ford} to the function $f=\zeta$, but with the center point $z+2k\eta$ instead of $z$.  There are no zeros or poles of $\zeta(s)$ in (or near) the strip
\[
\Re(z)+(2k-1)\eta\leq \Re(s)\leq \Re(z)+(2k+1)\eta.
\]
We obtain
\[
-\Re\frac{\zeta'(z+2k\eta)}{\zeta(z+2k\eta)}= \frac{1}{4\eta}\int_{-\infty}^{\infty}\frac{\log|\zeta(z+(2k-1)\eta+\frac{2\eta iu}{\pi})|-\log|\zeta(z+(2k+1)\eta+\frac{2\eta iu}{\pi})|}{\cosh^2(u)}\, du.
\]
As $k$ ranges from $1$ to $m$, we see that the integral terms telescope, yielding
\[
\sum_{k=1}^{m}-\Re\frac{\zeta'(z+2k\eta)}{\zeta(z+2k\eta)} = \frac{1}{4\eta}\int_{-\infty}^{\infty}\frac{\log|\zeta(z+\eta+\frac{2\eta iu}{\pi})|-\log|\zeta(z+(2m+1)\eta+\frac{2\eta iu}{\pi})|}{\cosh^2(u)}\, du.
\]
As $\Re(s)\to \infty$, we have $\zeta(s)\to 1$ uniformly.  Thus, taking $m\to \infty$ gives the claimed equality.
\end{proof}

Putting this all together, we have:
\begin{prop}\label{Prop:Simplified}
Suppose that
\begin{itemize}
\item $\zeta$ satisfies \eqref{Eq:KVBound} for some positive constants $A$ and $B$,
\item $\eta$ satisfies \eqref{Eq:EtaAsymp} for some positive constant $C$,
\item $0<\epsilon<\eta-1/t$,
\item $z=1+\epsilon+it$, and
\item $t>100$.
\end{itemize}
Then we have
\begin{equation}
\begin{array}{rcl}
\displaystyle \sum_{k=0}^{\infty}-\Re\frac{\zeta'(z+2k\eta)}{\zeta(z+2k\eta)} & < & \displaystyle\frac{\pi}{2\eta}\sum_{\substack{\rho\, :\, \zeta(\rho)=0,\\ |\Re(z-\rho)|\leq \eta}} \Re \cot \left(\frac{\pi(\rho-z)}{2\eta}\right)\\[30pt]
& & \  \displaystyle +\left(\frac{BC^{3/2}+2/3}{2C}+o(1)\right)(\log t)^{2/3}(\log\log t)^{1/3}.
\end{array}
\end{equation}
\end{prop}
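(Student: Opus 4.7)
The plan is to combine Lemma \ref{Lemma:FordMainLemma}, Lemma \ref{Lemma:NewkSums}, and the bound on the left-side integral that was derived from the KV estimate just before Lemma \ref{Lemma:NewkSums}. First I would apply Lemma \ref{Lemma:FordMainLemma} with $f=\zeta$ at the center $z=1+\epsilon+it$; since $\zeta$ has order $1$ and the vertical growth estimates noted at the start of Section 3 remove the exceptional set in $\eta$, the identity of that lemma holds as a genuine equation. Write its right-hand side as the $m_{\rho}\!\cot$-sum plus $L-R$, where
\[
L=\frac{1}{4\eta}\int_{-\infty}^{\infty}\frac{\log|\zeta(z-\eta+\tfrac{2\eta iu}{\pi})|}{\cosh^{2}(u)}\,du,\qquad R=\frac{1}{4\eta}\int_{-\infty}^{\infty}\frac{\log|\zeta(z+\eta+\tfrac{2\eta iu}{\pi})|}{\cosh^{2}(u)}\,du.
\]
By Lemma \ref{Lemma:NewkSums}, $R=\sum_{k\geq 1}-\Re\zeta'(z+2k\eta)/\zeta(z+2k\eta)$. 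Moving $R$ to the left-hand side and absorbing the $k=0$ term produces the equality
\[
\sum_{k=0}^{\infty}-\Re\frac{\zeta'(z+2k\eta)}{\zeta(z+2k\eta)}=\frac{\pi}{2\eta}\sum_{\substack{\rho\\ |\Re(z-\rho)|\leq\eta}}m_{\rho}\Re\cot\!\left(\frac{\pi(\rho-z)}{2\eta}\right)+L,
\]
in which the $\rho$-sum still includes the single pole at $s=1$.

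Next I would isolate and discard that pole term. Using the identity $\Re\cot(x+iy)=\sin(2x)/(\cosh(2y)-\cos(2x))$ quoted in Section 3 with $x=-\pi\epsilon/(2\eta)$ and $y=-\pi t/(2\eta)$, the pole contribution is bounded in absolute value by $(\pi/2\eta)\sech(\pi t/\eta)$, which is exponentially small in $t/\eta$; since $\eta\to 0$ only polylogarithmically while $t\to\infty$, this term is $o(1)$ and can be absorbed into the error in the claimed main coefficient. What remains on the right is precisely the sum over zeros of $\zeta$ in the statement of the proposition, plus $L$.

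Finally I would bound $L$ by applying \cite[Lemma 3.4]{Ford} with $\sigma=1+\epsilon-\eta$ and $a=2\eta/\pi$ (permissible since $\eta\geq\epsilon+1/t$ and $\eta<1/2$); this is exactly the strict inequality displayed in the text,
\[
L<\frac{1}{2\eta}\!\left(\log A+B\eta^{3/2}\log t+\tfrac{2}{3}\log\log t\right).
\]
Substituting $\eta=C(\log\log t/\log t)^{2/3}$ from \eqref{Eq:EtaAsymp}, the first summand is $O((\log t)^{2/3}(\log\log t)^{-2/3})=o((\log t)^{2/3}(\log\log t)^{1/3})$, the second equals $\tfrac{BC^{3/2}}{2C}(\log t)^{2/3}(\log\log t)^{1/3}$, and the third equals $\tfrac{2/3}{2C}(\log t)^{2/3}(\log\log t)^{1/3}$. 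Adding the three and folding the exponentially small pole contribution into $o(1)$ yields the displayed inequality, with the strict ``$<$'' inherited directly from Ford's bound on $L$.

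The argument is essentially a bookkeeping exercise, and the only real subtlety is verifying that three distinct ``sub-main'' contributions---the cotangent residue at $s=1$, the $\log A/(2\eta)$ piece, and the inequality in the integral bound---can all be collected cleanly into the single $o(1)$ multiplier of $(\log t)^{2/3}(\log\log t)^{1/3}$. Everything else is an application of results already established in the excerpt.
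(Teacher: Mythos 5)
Your proof is correct and follows exactly the route the paper intends: combine Lemma~\ref{Lemma:FordMainLemma} with $f=\zeta$ at $z=1+\epsilon+it$, replace the right-side integral by the $k\geq 1$ sum via Lemma~\ref{Lemma:NewkSums}, discard the exponentially small pole contribution from $s=1$, and bound the left-side integral with Ford's Lemma~3.4 together with the substitution \eqref{Eq:EtaAsymp}. The only cosmetic slip is the closing remark listing ``the inequality in the integral bound'' as a third contribution folded into the $o(1)$ — as you correctly explain earlier, that inequality is the source of the strict ``$<$'', not an additive error term.
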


\section{Trigonometric inequalities}

A common component of some of the earliest proofs of zero-free regions for $\zeta$ in the critical strip involve the fact that
\[
3+4\cos(\vartheta)+\cos(2\vartheta)=2(1+\cos(\vartheta))^2\geq 0,
\]
which holds for all real values of $\vartheta$.  Other trigonometric inequalities also appear in the literature.  For example, in both \cite{HB} and \cite{Ford}, more complicated trigonometric inequalities are used the optimize computations.  To that end, let $b_0,b_1,\ldots, b_d$ be positive constants such that
\begin{equation}\label{Eq:TrigInequalityBs}
p(\vartheta)=\sum_{j=0}^{d} b_j \cos(j\vartheta)\geq 0
\end{equation}
for any real value of $\vartheta$.  We will attempt to optimize the constants $b_0,b_1,\ldots, b_d$ in a later section.

The connection between \eqref{Eq:TrigInequalityBs} and the problem at hand comes from the fact that
\[
-\frac{\zeta'(s)}{\zeta(s)}=\sum_{n=2}^{\infty}\frac{\Lambda(n)}{n^s}
\]
when $\Re(s)>1$, and so
\[
-\Re\frac{\zeta'(s)}{\zeta(s)} = \sum_{n=2}^{\infty}\frac{\Lambda(n)}{n^{\Re(s)}}\cos(\Im(s)\log n).
\]
Thus, for real parameters $x,y$, with $x>1$, we then have
\begin{equation}\label{Eq:AppliedTrigInequality}
\sum_{j=0}^{d}-b_j\Re\frac{\zeta'(x+ijy)}{\zeta(x+ijy)} =\sum_{n=2}^{\infty} \frac{\Lambda(n)}{n^{x}}p(y\log n)\geq 0.
\end{equation}

We use \eqref{Eq:AppliedTrigInequality} when $x=1+\epsilon+2k\eta$ (for each integer $k\geq 1$) and when $y=t$.  In that case, the $j=1$ summand can be upper bounded using Proposition \ref{Prop:Simplified}.

We want to similarly bound the terms when $j>2$.  By running through the argument of the previous section, with $1+\epsilon+it$ replaced by $1+\epsilon+ijt$ (with $j\geq 2$) we obtain essentially the same results.  Thus, we strengthen Proposition \ref{Prop:Simplified} to the following:

\begin{prop}\label{Prop:SimplifiedWithTrig}
Suppose that
\begin{itemize}
\item $\zeta$ satisfies \eqref{Eq:KVBound} for some positive constants $A$ and $B$,
\item $\eta$ satisfies \eqref{Eq:EtaAsymp} for some positive constant $C$,
\item $0<\epsilon<\eta-1/t$,
\item $p$ satisfies \eqref{Eq:TrigInequalityBs} for some positive constants $b_0,b_1,\ldots, b_d$,
\item $z_{j,k}=1+\epsilon+2k\eta+ijt$, and
\item $t>100$.
\end{itemize}
Then we have
\begin{equation}\label{Eq:NonMollified}
\begin{array}{rcl}
0 &\leq & \displaystyle \sum_{j=0}^{d}\sum_{k=0}^{\infty}-b_j\Re\frac{\zeta'(z_{j,k})}{\zeta(z_{j,k})}\\[20pt]
 & < & \displaystyle \sum_{k=0}^{\infty}-b_0\frac{\zeta'(z_{0,k})}{\zeta(z_{0,k})}+ \sum_{j=1}^{d}\frac{\pi b_j}{2\eta}\sum_{\substack{\rho\, :\, \zeta(\rho)=0,\\ \Re(z_{j,0}-\rho)\leq \eta}} \Re \cot \left(\frac{\pi(\rho-z_{j,0})}{2\eta}\right)\\[30pt]
& & \  \displaystyle +\left(\sum_{j=1}^{d}b_j\right)\left(\frac{BC^{3/2}+2/3}{2C}+o_d(1)\right)(\log t)^{2/3}(\log\log t)^{1/3}.
\end{array}
\end{equation}
\end{prop}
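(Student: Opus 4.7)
My plan is to derive Proposition \ref{Prop:SimplifiedWithTrig} by combining the nonnegativity of the trigonometric polynomial $p$ with $d$ parallel applications of the arguments that produced Proposition \ref{Prop:Simplified}, one at each vertical height $jt$.

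First, I apply the identity \eqref{Eq:AppliedTrigInequality} with $x = 1+\epsilon+2k\eta$ and $y = t$ for every integer $k \geq 0$. Each application gives the pointwise inequality
\[
\sum_{j=0}^{d} -b_j\, \Re\frac{\zeta'(z_{j,k})}{\zeta(z_{j,k})} \geq 0,
\]
and summing over $k \geq 0$ (the series converges absolutely because $\Re z_{j,k} \geq 1+\epsilon$) yields the lower bound of zero in \eqref{Eq:NonMollified}. After swapping the $j$- and $k$-summations, the block $j = 0$ needs no further estimation: the arguments $z_{0,k} = 1+\epsilon+2k\eta$ are real, so $\Re$ is redundant and these terms appear unchanged as the first summand on the right-hand side of \eqref{Eq:NonMollified}.

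For each $j \in \{1, 2, \ldots, d\}$ I then rerun the entire derivation of Section~3, but with the central point $z = 1+\epsilon+it$ replaced by $z_{j,0} = 1+\epsilon+ijt$. Every simplifying observation from Section~3 carries over: $\Re z_{j,0} > 1$ keeps all nontrivial $\zeta$-zeros strictly to the left of $z_{j,0}$; the conjugate-zero contribution and the residue from the pole at $s=1$ remain exponentially small, since the vertical distance from $z_{j,0}$ to either point is at least $t > 100$; Lemma \ref{Lemma:NewkSums} applied with this new center converts the right-edge integral into the telescoping sum $\sum_{k=1}^{\infty} -\Re \zeta'(z_{j,k})/\zeta(z_{j,k})$; and the Korobov--Vinogradov bound \eqref{Eq:KVBound} controls the left-edge integral, now used with $T = jt$ in place of $T = t$. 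Together these produce, for each fixed $j \geq 1$, an inequality of exactly the shape of Proposition \ref{Prop:Simplified} but centered at $z_{j,0}$.

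The main technical point---and the only real bookkeeping---is the aggregation of the $d$ resulting error terms. Each block with $j \geq 1$ contributes an error of the form $\bigl(\tfrac{BC^{3/2}+2/3}{2C}+o(1)\bigr)(\log(jt))^{2/3}(\log\log(jt))^{1/3}$, and since $j$ is bounded by $d$ we have $\log(jt) = \log t + O_d(1)$ and $\log\log(jt) = \log\log t + o(1)$, so this error equals $\bigl(\tfrac{BC^{3/2}+2/3}{2C}+o_d(1)\bigr)(\log t)^{2/3}(\log\log t)^{1/3}$. Scaling by $b_j$ and summing over $j=1,\ldots,d$ produces the final summand of \eqref{Eq:NonMollified}. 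This step is routine because the derivation in Section~3 is linear in the various error contributions, so no hidden cross-terms of size $(\log t)^{2/3}(\log\log t)^{1/3}$ can arise between distinct $j$-blocks.
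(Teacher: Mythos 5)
Your proof matches the paper's (informal) justification: use \eqref{Eq:AppliedTrigInequality} with $x=1+\epsilon+2k\eta$ and $y=t$ summed over $k$ for the nonnegativity, keep the $j=0$ block intact, and for each $j\geq 1$ rerun the Section~3 argument with center $z_{j,0}=1+\epsilon+ijt$, noting that replacing $t$ by $jt$ only perturbs the leading error term by a factor $1+o_d(1)$. The only additions you make are worth-having elaborations — the absolute convergence of the $k$-sum and the explicit check that $\log(jt)^{2/3}(\log\log jt)^{1/3}=(1+o_d(1))(\log t)^{2/3}(\log\log t)^{1/3}$ — which the paper leaves implicit.
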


We finish this section by giving an additional bound on the sum $\sum_{k=1}^{\infty}-\frac{\zeta'(z_{0,k})}{\zeta(z_{0,k})}$.  This sum is the midpoint approximation of the integral
\[
\frac{1}{2\eta}\int_{1+\epsilon+\eta}^{\infty}-\frac{\zeta'(u)}{\zeta(u)}\, du = \frac{\log \zeta(1+\epsilon+\eta)}{2\eta} < \frac{\log \zeta(1+\eta)}{2\eta}.
\]
The midpoint approximation underestimates the integral, because the integrand is concave up.  Thus, by standard bounds on $\zeta$ along the real axis, and by \eqref{Eq:EtaAsymp}, we have
\[
-b_0\sum_{k=1}^{\infty}\frac{\zeta'(z_{0,k})}{\zeta(z_{0,k})} < b_0 \frac{\log \zeta(1+\eta)}{2\eta}=b_0 \frac{\log(1/\eta)+O(1)}{2\eta}<b_0\frac{\left(\frac{2}{3} + o(1)\right)(\log t)^{2/3}(\log\log t)^{1/3}}{2C}.
\]
This is precisely the bound achieved in \cite[Lemma 5.1]{Ford}, by different methods.  Now, combining this estimation with the last line of \eqref{Eq:NonMollified}, we see that to optimize the constant $C$ we need to minimize
\[
\left(\sum_{j=1}^{d}b_j\right)\frac{BC^{1/2}}{2} +\left(\sum_{j=0}^{d}b_j\right)\frac{1}{3C}.
\]
This is achieved by setting
\begin{equation}\label{Eq:Cdef}
C=\left(\frac{4\sum_{j=0}^{d}b_j}{3B\sum_{j=1}^{d}b_j}\right)^{2/3}.
\end{equation}
Taking $\epsilon\to 0$ everywhere in \eqref{Eq:NonMollified} except for the term $-b_0\frac{\zeta'(z_{0,0})}{\zeta(z_{0,0})}$, then under the same hypotheses as in Proposition \ref{Prop:SimplifiedWithTrig} we obtain
\begin{equation}\label{Eq:NonMollified2}
\begin{array}{rcl}
\displaystyle \sum_{j=0}^{d}\sum_{k=0}^{\infty}-b_j\Re\frac{\zeta'(z_{j,k})}{\zeta(z_{j,k})} & < & \displaystyle -b_0\frac{\zeta'(1+\epsilon)}{\zeta(1+\epsilon)} - \sum_{j=1}^{d}\frac{\pi b_j}{2\eta}\sum_{\substack{\rho\, :\, \zeta(\rho)=0,\\ 1-\Re\rho\leq \eta}} \Re \cot \left(\frac{\pi(1+ijt-\rho)}{2\eta}\right)\\
 & &\ \displaystyle +(1+o_d(1))\left(\frac{3}{4}B\left(\sum_{j=1}^{d}b_j\right)\left(\sum_{j=0}^{d}b_j\right)^{1/2}\log t\, (\log\log t)^{1/2} \right)^{2/3}.
\end{array}
\end{equation}
In the next section we will see how to take $\epsilon\to 0$ in the one remaining term where it appears.

\section{Mollified sums}

We are interested in estimating sums of the form
\[
\sum_{n=1}^{\infty} \frac{\Lambda(n)}{n^s}f(\log n),
\]
where $f$ is a nonnegative function.  The motivation for considering such mollified sums is well-explained in Sections 4--7 of \cite{HB2}, and the following is the optimal choice for $f$, as worked out in \cite[Section 6]{Ford}.

Let
\begin{equation}\label{Eq:LambdaDef}
\lambda=\frac{M}{(B\log t)^{2/3}(\log\log t)^{1/3}},
\end{equation}
where $M\geq 0$ will be determined later.  We will assume hereafter that $\zeta$ has no zero, $\rho$, in the region $1-\lambda\leq \Re \rho$ and $t-1\leq \Im\rho\leq dt+1$.  The results of other papers (or from the previous sections of this note) quickly shows that we can take $M>0$; this strict inequality will prove useful later.

Also, let $\theta$ be the unique solution of
\begin{equation}
\sin^2\theta=\frac{b_1}{b_0}(1-\theta\cot\theta),\ 0<\theta<\pi/2.
\end{equation}
Define the real function
\begin{equation}
g(u)=\begin{cases}
(\cos(u\tan \theta)-\cos\theta)\sec^2\theta & |u|<\theta/\tan\theta,\\
0 & \text{otherwise}.
\end{cases}
\end{equation}
Notice that $g$ is nonnegative and even, with finite support.  Put $w(u)=(g\ast g)(u)$, the convolution square of $g$, for $u\geq 0$.  Finally set
\begin{equation}\label{Eq:fdef}
f(u)=\lambda e^{\lambda u} w(\lambda u),\ \text{for }u\geq 0.
\end{equation}
Thus, $f$ is nonnegative, and has finite support $[0,2\theta/\tan\theta)$.  We have from \cite[Equation 6.6]{Ford},
\begin{equation}
f(0)=\lambda \sec^2\theta(\theta\tan \theta+3\theta\cot\theta-3)=O(\lambda).
\end{equation}

For notation ease, we will denote the Laplace transform of $f$ as
\begin{equation}\label{Eq:Fdef}
F(z)=\int_{0}^{\infty}f(y)e^{-zy}\, dy,
\end{equation}
and we will also make use of the auxiliary function
\begin{equation}\label{Eq:F0def}
F_0(z)=F(z)-\frac{f(0)}{z}.
\end{equation}
An important consequence of the choices above is that we have a bound
\begin{equation}\label{Eq:DBound}
|F_0(z)|\leq \frac{D}{|z|^2},
\end{equation}
where $D>0$ is a parameter that is independent of $z$ (but not $t$), and where $\Re z\geq 0$ and $|z|\geq \eta$.  From \cite[Equation 7.6]{Ford} we get that \eqref{Eq:DBound} holds with
\[
D=O(\lambda^2)=O\left(\frac{1}{(\log t)^{4/3}(\log\log t)^{2/3}}\right).
\]

This ultimately leads us to the following:

\begin{thm}\label{Thm:AlmostDone}
Suppose that
\begin{itemize}
\item $t>10000$,
\item $\eta$ satisfies \eqref{Eq:EtaAsymp}, where $C$ satisfies \eqref{Eq:Cdef},
\item $\lambda$ satisfies \eqref{Eq:LambdaDef}, for some constant $M>0$, and satisfies $\lambda<\frac{\eta}{250}$,
\item $p$ satisfies \eqref{Eq:TrigInequalityBs} for some positive constants $b_0,b_1,\ldots, b_d$,
\item $\theta,g,w,f,F,$ and $F_0$ are defined as at the beginning of this section,
\item $\zeta$ satisfies \eqref{Eq:KVBound} for some $A>6.5$ and some $B>0$,
\item $\zeta$ has a root at $\beta+it$, with $1-\beta<\eta$, and
\item $\zeta$ has no root $\rho$ in the region where $1-\lambda\leq \Re\rho$ and $t-1\leq \Im \rho\leq dt+1$.
\end{itemize}
Then
\begin{equation}\label{Eq:BoundOnVc}
\begin{array}{rcl}
0 & \leq & \displaystyle b_0F(0)-b_1\left(F_0(1-\beta)+f(0)\frac{\pi}{2\eta}\cot\left(\frac{\pi(1-\beta)}{2\eta}\right)\right)\\
& & \displaystyle + Mw(0)\left(\frac{3}{4}\left(\sum_{j=1}^{d}b_j\right)\left(\sum_{j=0}^{d}b_j\right)^{1/2}\right)^{2/3} + o_p(1).
\end{array}
\end{equation}
\end{thm}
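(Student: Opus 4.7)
The plan is to apply the trigonometric nonnegativity to the mollified Dirichlet series for $-\zeta'/\zeta$. Multiplying $p(t\log n) \geq 0$ by the nonnegative weight $\Lambda(n) f(\log n)/n^{1+\epsilon}$ and summing over $n$, the expansion $-\zeta'/\zeta(s) = \sum_n \Lambda(n) n^{-s}$ gives
\[
0 \leq \sum_{n \geq 2} \frac{\Lambda(n) f(\log n) p(t\log n)}{n^{1+\epsilon}} = \sum_{j=0}^d b_j \Re \sum_{n \geq 2} \frac{\Lambda(n) f(\log n)}{n^{s_j}}, \qquad s_j := 1+\epsilon+ijt.
\]
The theorem is then the result of estimating the right-hand side from above.

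The main tool is Laplace inversion $f(u) = \frac{1}{2\pi i}\int_{(c)} F(z) e^{zu}\,dz$. Interchanging sum and integral, and using the decomposition $F(z) = \tfrac{f(0)}{z} + F_0(z)$, one derives the identity
\[
\sum_{n} \frac{\Lambda(n) f(\log n)}{n^{s}} = f(0)\,\Bigl(-\frac{\zeta'}{\zeta}\Bigr)(s) + \frac{1}{2\pi i}\int_{(c)} F_0(z)\,\Bigl(-\frac{\zeta'}{\zeta}\Bigr)(s-z)\,dz,
\]
valid for $0 < c < \Re s - 1$. The two pieces are treated separately. For the $f(0)$ piece, summing over $j \geq 1$ with coefficients $b_j$ and invoking Proposition \ref{Prop:SimplifiedWithTrig} (specifically the consequence \eqref{Eq:NonMollified2}), the close zero $\rho_0 = \beta + it$ contributes the cotangent term $-b_1 f(0)\tfrac{\pi}{2\eta}\cot\!\bigl(\tfrac{\pi(1-\beta)}{2\eta}\bigr)$, since by the zero-free hypothesis it is the only zero within distance $\eta$ of $z_{1,0}$, while the remaining zeros and the Korobov--Vinogradov vertical-line integrals combine, using the direct calculation $f(0) = \lambda w(0)$ and \eqref{Eq:LambdaDef}, into the stated $Mw(0)\bigl(\tfrac{3}{4}\sum_{j\geq 1}b_j(\sum_j b_j)^{1/2}\bigr)^{2/3}$ main error. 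For the $F_0$ piece, I would shift the contour to $\Re z$ just past $1-\beta+\epsilon$, picking up the residue $-b_1 F_0(1+\epsilon-\beta) \to -b_1 F_0(1-\beta)$ from $\rho_0$ (for $j=1$), plus residues that fall into $o_p(1)$ by the decay bound \eqref{Eq:DBound}. The $j = 0$ contribution ultimately produces the $+b_0 F(0)$ term: upon careful recombination, the piece $b_0 f(0)(-\zeta'/\zeta)(1+\epsilon)$ together with the $j=0$ $F_0$-integral yields $-b_0 F(\epsilon)$, which when moved to the opposite side of $0 \leq \sum b_j T_j$ becomes $+b_0 F(\epsilon) \to +b_0 F(0)$.

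The main obstacle is careful sign and residue bookkeeping, in particular the cancellation in the $j = 0$ piece: the two summands $b_0 f(0)(-\zeta'/\zeta)(1+\epsilon) \sim b_0 f(0)/\epsilon$ and $b_0\Re\int F_0(-\zeta'/\zeta)(1+\epsilon-z)\,dz$ are individually divergent as $\epsilon \to 0$, but recombine into the finite quantity $-b_0 F(\epsilon) \to -b_0 F(0)$; this requires the identity to be applied at a fixed contour $0<c<\epsilon$ so that both divergences are present simultaneously and cancel before taking the limit. A secondary delicacy is showing that the $F_0$ vertical-line integrals after the contour shift, together with the far-zero cotangent contributions, really absorb into the $o_p(1)$ remainder: this uses the $|F_0(z)| \leq D/|z|^2$ decay with $D = O(\lambda^2)$, and the hypothesis $\lambda < \eta/250$ is what ensures the contour can be pushed far enough into the region where \eqref{Eq:DBound} applies without crossing additional zeros. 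Once these points are handled, taking $\epsilon \to 0$ and collecting terms yields the stated inequality.
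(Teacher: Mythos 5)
Your overall strategy---apply the trigonometric nonnegativity to the mollified Dirichlet series, split off the $f(0)/z$ singularity of $F$, and control the two resulting pieces---is the same decomposition the paper makes, and the paper essentially uses the zeros-sum version of your contour identity (this is \cite[Lemma 4.5]{Ford}). However, there are three concrete gaps.

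First, the $j=0$ bookkeeping is not right. Your identity gives $T_0 := K(1+\epsilon) = f(0)\bigl(-\tfrac{\zeta'}{\zeta}\bigr)(1+\epsilon) + F_0(\epsilon) - \sum_\rho F_0(1+\epsilon-\rho) + o(1)$; since $K(1+\epsilon) = \sum_n \Lambda(n)f(\log n)/n^{1+\epsilon} \geq 0$, the combination of the divergent parts is $+F(\epsilon)$, not $-F(\epsilon)$, and $T_0$ appears on the same side of $0\leq\cdots$ as $b_0F(0)$ in \eqref{Eq:BoundOnVc}, so nothing is ``moved to the opposite side.'' More importantly, after the cancellation the leftover is $K(1+\epsilon)$ itself (not literally $F(\epsilon)$), and passing from $K(1)$ to $F(0)$ requires a genuine prime-number-theorem estimate (this is \cite[Lemma 4.6]{Ford}), not a sign rearrangement.

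Second, and most seriously, you treat the residues from zeros $\rho\neq\rho_0$ near $z_{j,0}$ as $o_p(1)$ ``by the decay bound \eqref{Eq:DBound},'' and you also assert that the zero-free hypothesis makes $\rho_0$ the \emph{only} zero within $\eta$ of $z_{1,0}$. Neither holds. The hypothesis only excludes zeros with $\Re\rho\geq 1-\lambda$, and $\lambda<\eta/250\ll\eta$, so there may be $O_p(\log\log t)$ zeros with $\lambda<|1+ijt-\rho|<\eta$; and \eqref{Eq:DBound} only holds for $|z|\geq\eta$, so it says nothing about these. For such zeros the $F_0(1+ijt-\rho)$ residue and the $f(0)\tfrac{\pi}{2\eta}\cot(\cdot)$ term from \eqref{Eq:NonMollified2} must be \emph{combined} into the quantity $V_c(z)$ and controlled together, using the bound $-\Re V_c(z)=O(c^2)$ (with $c=\pi\lambda/2\eta$; this is where $\lambda<\eta/250$ actually enters) together with the zero count \cite[Lemma 4.2]{Ford}. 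Treating the cotangents and $F_0$ residues separately for close zeros loses this cancellation and the argument does not close.

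Third, a smaller but real omission: your starting inequality has only the $k=0$ terms $\sum_j b_j \Re K(z_{j,0})$, yet you then invoke \eqref{Eq:NonMollified2}, whose left-hand side is $\sum_{j,k\geq 0}-b_j\Re\tfrac{\zeta'}{\zeta}(z_{j,k})$. You need to first add the auxiliary nonnegative terms $f(0)\sum_{j}\sum_{k\geq 1}-b_j\Re\tfrac{\zeta'}{\zeta}(z_{j,k})$ (nonnegative by \eqref{Eq:AppliedTrigInequality}) to the chain, so that after expanding $K$ via your integral identity you actually produce the full $k\geq 0$ sum that \eqref{Eq:NonMollified2} bounds. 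Without that step the application of \eqref{Eq:NonMollified2} does not type-check.
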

\begin{proof}
If $\Re s>1$ and $0\leq \Im s=O(t)$, then by \cite[Lemma 4.5]{Ford} we obtain the equality
\begin{equation}\label{Eq:FirstMollifiedSum}
K(s):=\sum_{n=2}^{\infty}\frac{\Lambda(n)}{n^s}f(\log n) = -f(0)\frac{\zeta'(s)}{\zeta(s)}-\sum_{\substack{\rho\, :\, \zeta(\rho)=0,\\ \Re(\rho)>0}}F_0(s-\rho)+F_0(s-1)+o(1).
\end{equation}
(The hypothesis that $f$ is continuous from the right at $0$ was inadvertently dropped as a hypothesis in \cite[Lemma 4.5]{Ford}.  Also, the constant 10.8 in the proof should be replaced by $11.13$, which also affects later constants.)  Thus, again taking $z_{j,k}=1+\epsilon+2k\eta+ijt$, we have
\begin{equation*}
\begin{array}{rcl}
0 & \leq & \displaystyle \sum_{j=0}^{d}b_j \Re \left(K(z_{j,0})+f(0)\sum_{k=1}^{\infty}-\frac{\zeta'(z_{j,k})}{\zeta(z_{j,k})} \right) \\
 & = & \displaystyle \sum_{j=0}^{d}b_j \Re\left(
 f(0)\left(\sum_{k=0}^{\infty}-\frac{\zeta'(z_{j,k})}{\zeta(z_{j,k})}\right) -\sum_{\substack{\rho\, :\, \zeta(\rho)=0,\\ \Re(\rho)>0}}F_0(z_{j,0}-\rho)+F_0(z_{j,0}-1)\right)+o_p(1).
\end{array}
\end{equation*}
When $j\geq 1$, the terms $F_{0}(z_{j,0}-1)$ are $o(1)$ by \eqref{Eq:DBound}, but when $j=0$ the corresponding term is $F_0(\epsilon)$, which is nontrivial.  By \eqref{Eq:NonMollified2} we obtain
\begin{equation*}
\begin{array}{rcl}
0 & \leq  & \displaystyle b_0\left[ -f(0)\frac{\zeta'(1+\epsilon)}{\zeta(1+\epsilon)}+F_0(\epsilon) -\sum_{\substack{\rho\, :\, \zeta(\rho)=0,\\ \Re(\rho)>0}}F_0(1+\epsilon-\rho)\right]\\
& & \displaystyle + \sum_{j=1}^{d}b_j \Re\left(-\frac{\pi f(0)}{2\eta}\sum_{\substack{\rho\, :\, \zeta(\rho)=0,\\ 1-\Re\rho\leq \eta}} \cot \left(\frac{\pi(1+ijt-\rho)}{2\eta}\right) -\sum_{\substack{\rho\, :\, \zeta(\rho)=0,\\ \Re(\rho)>0}}F_0(1+ijt-\rho)\right)\\
 & & \displaystyle +f(0)\left(\frac{3}{4}B\left(\sum_{j=1}^{d}b_j\right)\left(\sum_{j=0}^{d}b_j\right)^{1/2}\log t\, (\log\log t)^{1/2} \right)^{2/3}+o_p(1).
 \end{array}
\end{equation*}
The quantity within the square braces is $K(1+\epsilon)+o(1)$.  It is finally possible to take $\epsilon\to 0$; by \cite[Lemma 4.6]{Ford} we have $K(1)\leq F(0)+o(1)$.  Moreover, by \cite[Equations (6.6) and (6.8)]{Ford} we have
\begin{equation}
F(0)=2\tan^2\theta+3-3\theta(\tan \theta+\cot\theta)
\end{equation}
as well as
\begin{equation}
\frac{f(0)}{\lambda} = w(0)=\sec^2\theta(\theta\tan\theta + 3\theta\cot\theta-3).
\end{equation}
So, using \eqref{Eq:LambdaDef}, our inequality simplifies to
\begin{equation}
\begin{array}{rcl}
0 & \leq & \displaystyle \sum_{j=1}^{d}b_j\Re\left(-\frac{\pi f(0)}{2\eta}\sum_{\substack{\rho\, :\, \zeta(\rho)=0,\\ 1-\Re\rho\leq \eta}}  \cot \left(\frac{\pi(1+ijt-\rho)}{2\eta}\right) -\sum_{\substack{\rho\, :\, \zeta(\rho)=0,\\ \Re(\rho)>0}}F_0(1+ijt-\rho)\right)\\
& & \displaystyle + b_0 F(0) + Mw(0)\left(\frac{3}{4}\left(\sum_{j=1}^{d}b_j\right)\left(\sum_{j=0}^{d}b_j\right)^{1/2}\right)^{2/3}+o_p(1).
\end{array}
\end{equation}
The terms $F_0(1+ijt-\rho)$, where $|1+ijt-\rho| \geq \eta$, can be bound by \cite[Lemma 4.3]{Ford} in conjunction with \eqref{Eq:DBound}.  Their total contribution is $o_p(1)$.  Also, we can drop the terms from the summation involving cotangents where $|1+ijt-\rho|\geq \eta$, since (as we mentioned previously) such terms make a negative contribution.

Set $c=\frac{\pi\lambda}{2\eta}$ and let $z=\frac{\pi}{2\eta}(1+ijt-\rho)$, where $\rho$ is any of the remaining zeros where $|1+ijt-\rho|\leq \eta$.  In particular, we have $|z|\leq \frac{\pi}{2}$.  Set
\[
V_c(z) = f(0)\frac{\pi}{2\eta}\cot\left(\frac{\pi(1+ijt-\rho)}{2\eta}\right)+F_0(1+ijt-\rho).
\]
In \cite{Ford}, it is established that
\[
-\Re V_c(z)\leq O(c^2)=O\left(\frac{1}{(\log\log t)^2}\right).
\]
(See equation (7.7) in that paper.  Note that a lot more works goes into the actual bound, so as to give an explicit bound.)  By \cite[Lemma 4.2]{Ford} the number of such zeros is $O_p(\log\log t)$,so we may ignore each of the remaining terms.  However, we will not drop the single term when $\rho=\beta+it$.  The resulting inequality is exactly \eqref{Eq:BoundOnVc}.
\end{proof}

Now, our immediate goal is to estimate
\begin{multline*}
-b_1\left(F_0(1-\beta)+f(0)\frac{\pi}{2\eta}\cot\left(\frac{\pi(1-\beta)}{2\eta}\right)\right)\\ = b_1\left(-F(1-\beta) + f(0)\left(\frac{1}{1-\beta}-\frac{\pi}{2\eta}\cot\left(\frac{\pi(1-\beta)}{2\eta}\right)\right)\right).
\end{multline*}
By \cite[Lemma 4.4]{Ford}, this quantity is
\[
\leq -b_1 F(1-\beta) + b_1\frac{f(0)}{\eta^2}(1-\beta)=-b_1 F(1-\beta)+o_p(1),
\]
using the assumption that $1-\beta\leq \eta$.  (In the contrary case, we have a bound better than the one we were hoping for.)

So, our immediate goal is now to estimate
\[
b_0F(0)-b_1F(1-\beta).
\]
This quantity has been optimized (subject to all the other constraints) via the choice of $\theta$, and the definitions of $f$ and $F$.  Letting $W$ be the Laplace transform of $w$, we have by equation (7.13) in \cite{Ford}, and the preceding discussion, that
\[
bF(0)-b_1F(1-\beta) =-b_0w(0)\cos^2\theta - b_1W'(0)\left(\frac{1-\beta}{\lambda}-1\right).
\]
(Right before that equation it is claimed that ``$W(x)$ and $W'(x)$ are both decreasing''.  What was meant is that $W(x)$ and $-W'(x)$ are decreasing.)  With the help of Mathematica, we compute that
\begin{equation}
-W'(0)=\frac{1}{3} \csc \theta ((15 - 12 \theta^2 + \theta (-15 + 4 \theta^2) \cot\theta) \csc\theta + 3 \theta \sec\theta),
\end{equation}
although we will see shortly that this value is irrelevant.  Together with Theorem \ref{Thm:AlmostDone}, the previous works leads us to want to optimize the bound
\begin{equation}\label{Eq:MinimizeM}
\frac{1-\beta}{\lambda}-1
\geq \frac{1}{-b_1W'(0)}\left(b_0w(0)\cos^2\theta-Mw(0)\left(\frac{3}{4}\left(\sum_{j=1}^{d}b_j\right)\left(\sum_{j=0}^{d}b_j\right)^{1/2}\right)^{2/3}\right).
\end{equation}
The last bulleted condition in Theorem \ref{Thm:AlmostDone} tells us that $1-\beta\geq \lambda$, and so the smallest that the right hand side of \eqref{Eq:MinimizeM} can be is $0$.  Thus, the value of $M$ that optimizes the inequality is
\begin{equation}\label{Eq:ValueOfM}
M=\frac{b_0\cos^2\theta}{\left(\frac{3}{4}\left(\sum_{j=1}^{d}b_j\right)\left(\sum_{j=0}^{d}b_j\right)^{1/2}\right)^{2/3}}.
\end{equation}

It is important, here, to point out that this inequality is self-improving if $M$ is chosen smaller than this value.  Recall that we already chose $M>0$ such that there is no zero, $\rho$, with $1-\lambda\leq \Re\rho$, and with $\Im \rho = O_{p}(t)$.  If $M$ is any smaller than the value given in \eqref{Eq:ValueOfM}, then the inequality \eqref{Eq:MinimizeM} immediately shows us that we could have increased $M$ anyway.

The quantity on the right side of \eqref{Eq:ValueOfM} is defined solely in terms of $b_0,b_1,\ldots, b_d$.  (It does involve $\theta$, which is also defined solely in terms of $b_0$ and $b_1$.)  We can numerically search for the optimal solution.  When $d=4$, the best value for $M$ is approximately $0.05507$, as in \cite{Ford}.  When $d=5$, there is a slightly better value of
\[
M=0.055127\ldots
\]
that is obtained using the trigonometric polynomial
\[
p(\vartheta)=(1+\cos\vartheta)(0.8652559\ldots + \cos\vartheta)^2(0.1974476\ldots+\cos\vartheta)^2.
\]
The numerics are apparently not improved by using even higher degree polynomials.

\section*{Acknowledgements}

This work was partially supported by a grant from the Simons Foundation (\#963435 to Pace P.\ Nielsen).

\providecommand{\MR}{\relax\ifhmode\unskip\space\fi MR }
\providecommand{\MRhref}[2]{%
  \href{http://www.ams.org/mathscinet-getitem?mr=#1}{#2}
}
\providecommand{\href}[2]{#2}

\end{document}